\documentclass[11pt]{article}

\usepackage[T1]{fontenc}
\usepackage[utf8]{inputenc}
\usepackage{lmodern}
\usepackage{amsmath,amssymb,amsthm,mathtools}
\usepackage[shortlabels]{enumitem}
\usepackage[margin=1.08in]{geometry}
\usepackage[colorlinks=true,linkcolor=blue,citecolor=blue,urlcolor=blue]{hyperref}

\setlist[enumerate,1]{label=(\roman*)}

\newtheorem{theorem}{Theorem}[section]

\newtheorem{corollary}[theorem]{Corollary}

\newtheorem{conjecture}[theorem]{Conjecture}
\newtheorem{observation}[theorem]{Observation}
\newtheorem{remark}[theorem]{Remark}

\newcommand{\Dmin}{\Delta^{\min}}
\newcommand{\Dmax}{\Delta^{\max}}
\newcommand{\delmin}{\delta^{\min}}
\newcommand{\delmax}{\delta^{\max}}
\newcommand{\dmin}{d^{\min}}
\newcommand{\dmax}{d^{\max}}

\newcounter{hsstatement}
\newcommand{\HSitem}[3]{%
  \item[\ensuremath{\mathrm{HS}_{#1}^{#2}}:]%
  \renewcommand{\thehsstatement}{\ensuremath{\mathrm{HS}_{#1}^{#2}}}%
  \refstepcounter{hsstatement}%
  \label{#3}}

\title{The Hajnal--Szemer\'edi theorem in digraphs revisited}
\author{Louis DeBiasio\thanks{Department of Mathematics, Miami University,
Oxford, OH, USA. E-mail: \texttt{debiasld@miamioh.edu}. Research supported
in part by AMS--Simons Research Enhancement Grants
for PUI Faculty GR001015.}, Hal Kierstead\thanks{Arizona State University,
Tempe, AZ, USA. E-mail: \texttt{kierstead@asu.edu}.}}
\date{\today}

\begin{document}

\maketitle

\begin{abstract}
Treglown conjectured (in a complementary form) that for every positive integer $k$, every digraph $D$ satisfying
$\min\{d^+(v),d^-(v)\}\le k-1$ for all $v\in V(D)$ has an equitable acyclic $k$-coloring.  If true, this would imply the acyclic coloring versions of the Hajnal-Szemer\'edi theorem for digraphs proved by Czygrinow, DeBiasio, Kierstead, and Molla (which in turn imply the original Hajnal-Szemer\'edi theorem for graphs).  

As it turns out, there is a simple reduction implicit in Aboulker, Oijid, Petit, Rocton, and
Simon which  surprisingly shows that Treglown's conjecture (and thus the results of Czygrinow, DeBiasio, Kierstead, and Molla) follows directly from the original
Hajnal--Szemer\'edi theorem for graphs.  We slightly modify the reduction in order to show that there exists a polynomial time algorithm for finding an equitable acyclic $k$-coloring in such a digraph.
\end{abstract}

\section*{Acknowledgements}
The reduction given in this paper arose
during a discussion between the first author and ChatGPT 5.6 Sol attempting to locate the bottleneck in extending the results of \cite{CDKM} to prove Conjecture \ref{con:treglown}.  Instead of locating the bottleneck, the chatbot gave a clever proof which shows that Conjecture \ref{con:treglown} reduces to the original Hajnal-Szemer\'edi theorem. 
After further discussion about the originality of this idea, the chatbot identified earlier work of Aboulker, Oijid, Petit, Rocton, and Simon~\cite{AOPRS} (building on work of Davot, Isenmann, Roy, and Thiebaut~\cite{DIRT}) that uses the undirected graph associated with a feedback arc set to transfer results about independent sets in graphs to acyclic sets in digraphs while preserving the relevant degree conditions. So the main result here of showing that Hajnal-Szemer\'edi's theorem implies Treglown's conjecture already follows from \cite{AOPRS}.  

Our original contribution is thus the slight modification which allows us to explicitly select a (not-necessarily-minimum) feedback arc set with the desired property in polynomial time.  

\section{Introduction}

A partition $\{V_1,\ldots,V_k\}$ of a set $V$ is \emph{equitable} if $||V_i|-|V_j||\le 1$ for all $i,j\in [k]$.  An \emph{equitable} $k$-coloring of a graph $G$ is an equitable partition $\{V_1, \dots, V_k\}$ of $V(G)$ such that $V_i$ is an independent set for all $i\in [k]$.  

One of the cornerstone results in graph theory is the Hajnal-Szemer\'edi~\cite{HS} theorem which says that for all positive integers $k$ and all graphs $G$, if $G$ has maximum degree at most $k-1$, then $G$ has an equitable $k$-coloring.  

The Hajnal-Szemer\'edi theorem was first generalized to acyclic colorings in digraphs by Czygrinow, DeBiasio, Kierstead, and Molla~\cite{CDKM} and Treglown~\cite{Tre}.  Before stating their results, we must introduce some notation.

A digraph $D$ is a pair $(V,E)$ where $E$ is an irreflexive relation on
$V$; that is,
$E\subseteq V\times V\setminus\{(v,v):v\in V\}.$
Given $D=(V,E)$, we write $V(D)$ for $V$ and $E(D)$ for $E$.  A graph
$G$ is viewed as a digraph in which $E(G)$ is symmetric.  Given
$v\in V(D)$, let
$N^+(v):=\{w\in V(D):(v,w)\in E(D)\}$ and $N^-(v):=\{u\in V(D):(u,v)\in E(D)\}$ and set $d^\pm=|N^\pm(v)|$.
Write $d(v)=d^+(v)+d^-(v)$. Let $\Delta^\pm(D)=\max\{d^\pm(v):v\in V(D)\}$ and $\Delta(D)=\max\{d(v):v\in V(D)\}$.  Analogously, let $\delta^\pm(D)=\min\{d^\pm(v):v\in V(D)\}$ and $\delta(D)=\min\{d(v):v\in V(D)\}$.

Define the \emph{mindegree} and
\emph{maxdegree} of a vertex $v$ by\footnote{As far as we are aware, the terms \emph{mindegree} and
\emph{maxdegree} were introduced by Aboulker and Aubian~\cite{AA} and appear again in later works such as \cite{AOPRS} and \cite{PA}.}
$\dmin(v):=\min\{d^+(v),d^-(v)\}$
and $\dmax(v):=\max\{d^+(v),d^-(v)\}$. Define the \emph{maximum mindegree} and \emph{maximum maxdegree} as
$\Dmin(D):=\max\{\dmin(v):v\in V(D)\}$ and 
$\Dmax(D):=\max\{\dmax(v):v\in V(D)\}$,
respectively.  We define $\delmin$ and $\delmax$ analogously.

We first make the following simple observation regarding the degree
conditions.

\begin{observation}\label{obs:deg-con}
Let $D$ be a digraph and let $k$ be a positive integer.
\begin{enumerate}
  \item If $\Dmax(D)\le k-1$, then $\Delta(D)\le2k-2$ and
  $\Delta^+(D),\Delta^-(D)\le k-1$.
  \item If $\Delta(D)\le2k-1$, or $\Delta^+(D)\le k-1$, or
  $\Delta^-(D)\le k-1$, then $\Dmin(D)\le k-1$.
\end{enumerate}
\end{observation}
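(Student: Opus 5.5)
Both parts follow directly from the definitions of $\dmin$ and $\dmax$, so the plan is to fix an arbitrary vertex $v\in V(D)$, derive the required bound for $v$, and then take the maximum over all vertices.

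For part (i), assume $\Dmax(D)\le k-1$ and fix $v$. Since $d^+(v)\le \dmax(v)\le \Dmax(D)\le k-1$, and likewise $d^-(v)\le k-1$, maximizing over $v$ gives $\Delta^+(D),\Delta^-(D)\le k-1$. Adding the two bounds gives $d(v)=d^+(v)+d^-(v)\le 2k-2$, so $\Delta(D)\le 2k-2$.

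For part (ii), fix $v$ and treat the three hypotheses in turn. If $\Delta^+(D)\le k-1$, then $\dmin(v)\le d^+(v)\le k-1$. The case $\Delta^-(D)\le k-1$ is symmetric, using $\dmin(v)\le d^-(v)$. If $\Delta(D)\le 2k-1$, then
\[
2\dmin(v)\le d^+(v)+d^-(v)=d(v)\le 2k-1,
\]
so $\dmin(v)\le k-\tfrac12$. Since $\dmin(v)$ is an integer, $\dmin(v)\le k-1$. In every case, taking the maximum over $v$ gives $\Dmin(D)\le k-1$.

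The only point needing any care is the integrality step in the case $\Delta(D)\le 2k-1$: the minimum of two nonnegative integers summing to at most $2k-1$ is at most $k-1$. Everything else is immediate from the definitions.
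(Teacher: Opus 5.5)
Your proof is correct and is the intended argument. The paper states this observation without proof as immediate from the definitions, and your vertex-by-vertex check, including the integrality step for the case $\Delta(D)\le 2k-1$, supplies exactly that verification.
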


Given $U\subseteq V(D)$, let $D[U]$ denote the digraph induced by $U$.
A \emph{directed cycle} is a digraph $C$ with vertex set
$\{u_1,\ldots,u_t\}$, where $t\ge2$, and edge set
$\{(u_i,u_{i+1}):i\in[t-1]\}\cup\{(u_t,u_1)\}.$
A digraph is \emph{acyclic} if it contains no directed cycle.
A \emph{topological order} of a digraph $D$ is a linear order $\prec$
on $V(D)$ such that $u\prec v$ for every arc $(u,v)\in E(D)$.

An \emph{equitable acyclic $k$-coloring} of $D$ is an equitable
partition $\{U_1,\ldots,U_k\}$ of $V(D)$ such that every $D[U_i]$ is
acyclic.

In this paper, we consider the following generalizations of the Hajnal--Szemer\'edi theorem (the original being \ref{HS5}).

Let $k$ be a positive integer.
\begin{enumerate}[label={},leftmargin=4em,labelsep=0.5em]
  \HSitem{1}{}{HS1} For all digraphs $D$, if $\Dmin(D)\le k-1$, then
  $D$ has an equitable acyclic $k$-coloring.
  \HSitem{2}{+}{HS2+} For all digraphs $D$, if
  $\Delta^+(D)\le k-1$, then $D$ has an equitable acyclic
  $k$-coloring.
  \HSitem{2}{-}{HS2-} For all digraphs $D$, if
  $\Delta^-(D)\le k-1$, then $D$ has an equitable acyclic
  $k$-coloring.
  \HSitem{3}{}{HS3} For all digraphs $D$, if $\Delta(D)\le2k-1$,
  then $D$ has an equitable acyclic $k$-coloring.
  \HSitem{4}{}{HS4} For all digraphs $D$, if $\Dmax(D)\le k-1$,
  then $D$ has an equitable acyclic $k$-coloring.
  \HSitem{5}{}{HS5} For all graphs $G$, if $\Delta(G)\le k-1$, then
  $G$ has an equitable $k$-coloring.
\end{enumerate}

By Observation~\ref{obs:deg-con}, \ref{HS1} implies \ref{HS2+},
\ref{HS2-}, and \ref{HS3}.  Likewise, each of \ref{HS2+}, \ref{HS2-}, and
\ref{HS3} implies \ref{HS4}.  Since a graph is just a symmetric digraph and an acyclic coloring of a symmetric digraph is a proper coloring,
\ref{HS4} implies \ref{HS5}.  Moreover, \ref{HS2+} and \ref{HS2-} are
equivalent by reversing the direction of every edge.

Czygrinow, DeBiasio, Kierstead, and Molla~\cite{CDKM} proved \ref{HS3}
and noted that minor modifications of their proof also give \ref{HS2+}
and \ref{HS2-}.  Around the same time, Treglown \cite[Conjecture 1.4]{Tre} proposed \ref{HS1} as a
conjecture (albeit in the complementary language of tiling $n$-vertex digraphs $D$ with transitive tournaments on $r$ vertices where $r$ divides $n$ and $\delta^{\max}(D)\geq (1-\frac{1}{r})n$).

\begin{conjecture}[Treglown~\cite{Tre}]\label{con:treglown}
For every positive integer $k$ and every digraph $D$, if
$\Dmin(D)\le k-1$, then $D$ has an equitable acyclic $k$-coloring.
\end{conjecture}

The purpose of this note is to prove the surprising implication
\ref{HS5}$\Rightarrow$\ref{HS1} (and to do so in such a way which gives a polynomial time algorithm). Combined with the discussion above, it follows that all of \ref{HS1}--\ref{HS5} are equivalent.

Given that all of \ref{HS1}--\ref{HS4} apply to a larger class of objects, this may appear almost paradoxical.  Nonetheless, given such a digraph $D$ as in \ref{HS1}--\ref{HS4}, one can derive an auxiliary graph $G$ which has an equitable coloring by \ref{HS5} and whose independent sets transfer back to acyclic sets in $D$.

\section{Backedge graphs and feedback arc sets}

Let $\prec$ be a linear order on $V(D)$.  An arc $(u,v)$ is a
\emph{backedge} of $\prec$ if $v\prec u$.  The \emph{backedge graph}
$B_\prec(D)$ is the graph on $V(D)$ in which $uv$ is an edge whenever
at least one of $(u,v)$ and $(v,u)$ is a backedge.  If $I$ is an
independent set of $B_\prec(D)$, then every arc of $D[I]$ is directed
forward in the restriction of $\prec$ to $I$.  In particular, $D[I]$
is acyclic.

Given a digraph $D$, a set $F\subseteq E(D)$ is a \emph{feedback arc set} of $D$
if $D-F:=(V(D),E(D)\setminus F)$ is acyclic.  Let
$G(D,F)$ be the graph on $V(D)$ with
\[
  E(G(D,F)):=\{\{u,v\}:(u,v)\in F\text{ or }(v,u)\in F\}.
\]
We note that if $I$ is an independent set of $G(D,F)$, then no arc of $F$ has both
endpoints in $I$, and therefore $D[I]\subseteq D-F$ which implies that $D[I]$ is acyclic.

The following standard observation is implicit in the proof of \cite[Lemma 2.1]{AOPRS} and relates these two concepts.

\begin{observation}\label{obs:feedback-backedge}
Let $F'$ be an inclusion-minimal feedback arc set of a digraph $D$, and
let $\prec$ be any topological order of $D-F'$.  Then
\[
  G(D,F')=B_\prec(D).
\]
\end{observation}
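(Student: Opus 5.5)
We prove $G(D,F')=B_\prec(D)$ by showing that the backedges of $\prec$ are exactly the arcs of $F'$. Both graphs are defined from a set of arcs in the same way: $G(D,F')$ from $F'$, and $B_\prec(D)$ from the set $R_\prec$ of backedges of $\prec$. So it suffices to prove $R_\prec=F'$.

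First, $R_\prec\subseteq F'$. Since $\prec$ is a topological order of $D-F'$, every arc of $E(D)\setminus F'$ goes forward in $\prec$. Hence every backedge of $\prec$ lies in $F'$.

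Second, $F'\subseteq R_\prec$. Suppose some $(u,v)\in F'$ is not a backedge, so $u\prec v$. Set $F''=F'\setminus\{(u,v)\}$. Every arc of $D-F''$ goes forward in $\prec$: the arcs of $D-F'$ do because $\prec$ is a topological order of $D-F'$, and $(u,v)$ does because $u\prec v$. A digraph whose arcs all go forward in a linear order contains no directed cycle, since following a cycle around would give a strictly increasing sequence that returns to its start. So $D-F''$ is acyclic and $F''$ is a feedback arc set properly contained in $F'$. This contradicts the inclusion-minimality of $F'$. Hence every arc of $F'$ is a backedge.

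Therefore $R_\prec=F'$, and the two graphs have the same edge set $\{\{u,v\}:(u,v)\in F'\text{ or }(v,u)\in F'\}$, so $G(D,F')=B_\prec(D)$.

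The only step that uses minimality is the second inclusion, and it goes through directly. The one point to check is the case where $D$ contains both $(u,v)$ and $(v,u)$. These are distinct arcs, so the argument applies to each separately, and since edges in both graphs are symmetrized the conclusion is unaffected.
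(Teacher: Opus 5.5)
Your proposal is correct and follows essentially the same route as the paper. You show that the backedges of $\prec$ are exactly the arcs of $F'$: arcs outside $F'$ are forward, and a forward arc of $F'$ could be removed from $F'$, contradicting inclusion-minimality. You also supply the small details the paper leaves implicit, namely why an all-forward digraph is acyclic and the $2$-cycle case.
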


\begin{proof}
Every arc outside $F'$ is forward in $\prec$.  If an arc $e\in F'$
were also forward, then $\prec$ would remain a topological order after
adding $e$ back to $D-F'$.  Equivalently, $F'\setminus\{e\}$ would
still be a feedback arc set, contrary to the inclusion-minimality of
$F'$.  Thus the arcs of $F'$ are precisely the backedges of $D$ with
respect to $\prec$, which proves the equality.
\end{proof}

This backedge-graph/feedback-arc-set viewpoint has been
used in several works on tournament and digraph coloring; see
Aboulker, Oijid, Petit, Rocton, and Simon~\cite{AOPRS} and the
references therein.  In particular, Davot, Isenmann, Roy, and Thiebaut~\cite{DIRT} defined the \emph{degreewidth} of a digraph\footnote{technically, they only defined this for tournaments} $D$ as $\overrightarrow\Delta(D):=\min_\prec\Delta(B_\prec(D))$.  Aboulker, Oijid, Petit, Rocton, and
Simon~\cite[Lemma~3.1]{AOPRS} developed the backedge-graph framework for general
digraphs and proved that
\begin{equation}\label{eq:d-bound}
  \overrightarrow\Delta(D)\le\Dmin(D).
\end{equation}

The following is the main result of the paper and a slight strengthening of \eqref{eq:d-bound}.  In fact, the local moves in the argument below are equivalent to those in~\cite{AOPRS}, but we state our result in such a way as to give an explicit polynomial time algorithm.

\begin{theorem}\label{thm:feedback-graph}
Every digraph $D$ has a feedback arc set $F\subseteq E(D)$ with the
following properties:
\begin{enumerate}
  \item $d_{G(D,F)}(v)\leq\min\{d_D^+(v),d_D^-(v)\}$ for every
  $v\in V(D)$;
  \item if $I$ is an independent set of $G(D,F)$, then $D[I]$ is
  acyclic.
\end{enumerate}
Furthermore, such an $F$ can be found in polynomial time.
\end{theorem}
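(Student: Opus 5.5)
Property (ii) is automatic for any feedback arc set $F$, as noted just before Observation~\ref{obs:feedback-backedge}: if $I$ is independent in $G(D,F)$ then $D[I]\subseteq D-F$, which is acyclic. So the work is to produce, in polynomial time, a linear order $\prec$ of $V(D)$ whose backedge graph satisfies $d_{B_\prec(D)}(v)\le\dmin(v)$ for every $v$. We then take $F$ to be the set of backedges of $\prec$. This $F$ is a feedback arc set, since $\prec$ is a topological order of $D-F$, and $G(D,F)=B_\prec(D)$ by definition. Note that we do not need $F$ to be inclusion-minimal; this is the point of working with orders directly.

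We find $\prec$ by local search. Fix an order $\prec$ and a vertex $v$, and look at the arcs incident to $v$ that are backedges. Each neighbor $u$ of $v$ in $B_\prec(D)$ is joined to $v$ by at least one backedge. The move is as follows.
\begin{itemize}
\item If $d_{B_\prec}(v)>d^-(v)$, move $v$ to the very front of the order. Afterwards every arc out of $v$ is forward, so the only backedges at $v$ are in-arcs. Hence $v$ has at most $d^-(v)$ backedge-neighbors.
\item If $d_{B_\prec}(v)>d^+(v)$, symmetrically move $v$ to the very end. Then $v$ has at most $d^+(v)$ backedge-neighbors.
\end{itemize}
Moving $v$ changes only arcs incident to $v$, so the backedge status of every other pair is untouched.

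The key step, and the main obstacle, is to show that each such move strictly decreases the total number of backedges $|F_\prec|$. That makes the procedure terminate after at most $|E(D)|$ moves, each taking polynomial time to detect and perform. Take the move to the front.
\begin{itemize}
\item Before the move, the backedges at $v$ involve at least $d_{B_\prec}(v)$ distinct neighbors. So the number of backedges incident to $v$ is at least $d_{B_\prec}(v)>d^-(v)$.
\item After the move, the backedges incident to $v$ are exactly the in-arcs of $v$, so there are at most $d^-(v)$ of them.
\item No other arc changes status.
\end{itemize}
So $|F_\prec|$ drops by at least one. The move to the end is the same argument with $d^+(v)$ in place of $d^-(v)$.

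One subtlety is the care needed over pairs joined by arcs in both directions. Counting arcs rather than graph edges handles this: a 2-cycle $u\leftrightarrow v$ always contributes exactly one backedge and one edge of $B_\prec$, while the arc count bounds the neighbor count from above. When no move applies, every vertex satisfies $d_{B_\prec}(v)\le\min\{d^+(v),d^-(v)\}$, which gives (i).
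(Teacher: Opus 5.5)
Your proof is correct and is essentially the paper's argument. Moving $v$ to the front (resp.\ end) is exactly the paper's replacement of $F(v)$ by all in-arcs (resp.\ out-arcs) of $v$, the potential $|F|$ is the same, and the differences are cosmetic: you keep $F$ equal to the backedge set of an explicit order and trigger on $d_{B_\prec}(v)$, whereas the paper starts from $F=E(D)$, triggers on $|F(v)|$, and needs no order, finishing with $d_{G(D,F)}(v)\le|F(v)|$.
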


\begin{proof}
For a feedback arc set $F$ of $D$ and a vertex $v\in V(D)$, let
$F(v)$ be the set of arcs in $F$ incident with $v$.  Note that
\[d_{G(D,F)}(v)\leq |F(v)|,\] since two oppositely directed arcs in
$F(v)$ give only one edge of $G(D,F)$.

Begin with $F:=E(D)$, which is clearly a feedback arc set.  Suppose
there is a vertex $v$ such that $|F(v)|>d_D^+(v)$, and set
\[
  F^+:=(F\setminus F(v))
       \cup\{(v,x)\in E(D):x\in V(D)\}.
\]
Then $F^+$ is also a feedback arc set.  Indeed, every directed cycle
avoiding $v$ contains an arc of $F\setminus F(v)$, while every directed
cycle containing $v$ contains an arc leaving $v$.  Moreover,
\[
  |F^+|=|F|-|F(v)|+d_D^+(v)<|F|.
\]
Similarly, if $|F(v)|>d_D^-(v)$, then
\[
  F^-:=(F\setminus F(v))
       \cup\{(x,v)\in E(D):x\in V(D)\}
\]
is a feedback arc set satisfying
\[
  |F^-|=|F|-|F(v)|+d_D^-(v)<|F|.
\]

Starting with $F=E(D)$, we repeatedly perform either applicable
replacement $F:= F^+$ or $F:= F^-$.  Each replacement
preserves the property of being a feedback arc set and strictly
decreases $|F|$.  Hence the process terminates.  At termination, for
every $v\in V(D)$, we have
\[
  |F(v)|\leq d_D^+(v)
  \text{ and }
  |F(v)|\leq d_D^-(v).
\]
Consequently,
\[
  d_{G(D,F)}(v)\leq |F(v)|
  \leq\min\{d_D^+(v),d_D^-(v)\},
\]
which proves (i).  Property (ii) follows from the observation preceding
the theorem.

Set $n:=|V(D)|$ and $m:=|E(D)|$.  The algorithm updates the original feedback arc set at most $m$ times and each update requires scanning 
at most $2(n-1)$ incident arcs; thus the algorithm uses at most
$2m(n-1)$ incident-arc scans.  So the running time is $O(nm)$ with a modest hidden constant.
\end{proof}

\begin{remark}\label{rem:1}
The feedback arc set produced above need not have minimum cardinality,
or even be inclusion-minimal.  However, we may delete arcs from $F$
until an inclusion-minimal feedback arc set $F'$ remains, and this can
only decrease the degrees of the associated graph.  Thus by Observation \ref{obs:feedback-backedge} it follows that Theorem \ref{thm:feedback-graph} implies \eqref{eq:d-bound}.
\end{remark}

\begin{corollary}\label{thm:HS5-implies-HS1}
\ref{HS5} implies \ref{HS1}; in particular, Conjecture~\ref{con:treglown} holds and \ref{HS1}, \ref{HS2+},
\ref{HS2-}, \ref{HS3}, \ref{HS4}, \ref{HS5} are all equivalent. Furthermore, the equitable acyclic coloring can be found in polynomial time.
\end{corollary}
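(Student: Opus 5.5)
The plan is to feed the auxiliary graph from Theorem~\ref{thm:feedback-graph} into \ref{HS5}. Let $k$ be a positive integer and $D$ a digraph with $\Dmin(D)\le k-1$. Apply Theorem~\ref{thm:feedback-graph} to $D$ to obtain, in polynomial time, a feedback arc set $F$. Set $G:=G(D,F)$. By property (i), every $v$ satisfies
\[
d_G(v)\le\min\{d_D^+(v),d_D^-(v)\}=\dmin(v)\le\Dmin(D)\le k-1,
\]
so $\Delta(G)\le k-1$. The graph $G$ has vertex set $V(D)$, so \ref{HS5} gives an equitable $k$-coloring $\{V_1,\dots,V_k\}$ of $G$. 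Each $V_i$ is independent in $G$, so by property (ii) each $D[V_i]$ is acyclic. The partition is equitable as a partition of $V(D)$, so it is an equitable acyclic $k$-coloring of $D$. This proves \ref{HS5}$\Rightarrow$\ref{HS1}.

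Next come the equivalences. The introduction already records, via Observation~\ref{obs:deg-con} and the fact that graphs are symmetric digraphs, the chain of implications
\[
\ref{HS1}\Rightarrow\ref{HS2+},\ \ref{HS2-},\ \ref{HS3}\Rightarrow\ref{HS4}\Rightarrow\ref{HS5}.
\]
Combined with \ref{HS5}$\Rightarrow$\ref{HS1}, all six statements are equivalent. The Hajnal--Szemer\'edi theorem \cite{HS} establishes \ref{HS5}, so all of them hold; in particular Conjecture~\ref{con:treglown}, which is exactly \ref{HS1}, holds.

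For the algorithmic claim, we combine the $O(nm)$ construction of $F$ and $G$ from Theorem~\ref{thm:feedback-graph} with a known polynomial-time algorithm producing an equitable $k$-coloring of a graph with maximum degree at most $k-1$ (e.g.\ Kierstead--Kostochka--Mydlarz--Szemer\'edi). The coloring of $G$ is returned unchanged as the coloring of $D$. The only step not internal to the paper is citing this algorithmic form of \ref{HS5}. The reduction itself is routine, so this citation is the main point needing care: the statement ``\ref{HS5} implies \ref{HS1}'' is purely existential, and the polynomial-time claim genuinely depends on the constructive Hajnal--Szemer\'edi result. One small detail is the case $k\ge n$, where any partition into singletons and empty sets works trivially; the general argument covers it anyway.
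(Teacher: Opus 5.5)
Your proposal is correct and follows the paper's proof essentially step for step. You apply Theorem~\ref{thm:feedback-graph}, bound $\Delta(G(D,F))$ by $\Dmin(D)\le k-1$, invoke \ref{HS5} (algorithmically via Kierstead--Kostochka--Mydlarz--Szemer\'edi), transfer independence to acyclicity via property (ii), and read off the equivalences from the implication chain in the introduction. Your explicit remark that the Hajnal--Szemer\'edi theorem itself establishes \ref{HS5}, which is what makes Conjecture~\ref{con:treglown} actually hold, is left implicit in the paper.
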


\begin{proof}
Assume \ref{HS5}.  Let $D$ be a digraph with
$\Dmin(D)\le k-1$, let $F$ be the feedback arc set given by
Theorem~\ref{thm:feedback-graph}, and set $G:=G(D,F)$.  Then
\[
  \Delta(G)\le\Dmin(D)\le k-1.
\]
By \ref{HS5}, the graph $G$ has an equitable proper $k$-coloring.
Moreover, Kierstead, Kostochka, Mydlarz, and
Szemer\'edi~\cite{KKMS} give a polynomial-time algorithm for finding
such a coloring.
Every color class is an independent set of $G$ and hence, by
Theorem~\ref{thm:feedback-graph}(ii), induces an acyclic subdigraph of
$D$.  Thus this coloring is an equitable acyclic $k$-coloring of $D$.
Together with the algorithm in Theorem~\ref{thm:feedback-graph}, this
also gives a polynomial-time algorithm for constructing the equitable
acyclic coloring.

The fact that \ref{HS1}, \ref{HS2+},
\ref{HS2-}, \ref{HS3}, \ref{HS4}, \ref{HS5} are all equivalent now follows from the discussion preceding Conjecture~\ref{con:treglown}.
\end{proof}

\section{Ore-type versions and further remarks}

Kierstead and Kostochka~\cite{KKOre} proved the following Ore-type extension of \ref{HS5} 

\begin{theorem}\label{thm:ore}
For all positive integers $k$, if $G$ is a graph such that for all $uv\in E(G)$, $d(u)+d(v)\leq 2k-1$, then $G$ has an equitable $k$-coloring.  
\end{theorem}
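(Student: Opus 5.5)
This is the Kierstead--Kostochka Ore-type theorem, quoted from \cite{KKOre}. I would prove it by following their discharging/recoloring strategy for \ref{HS5}, not by the feedback-arc-set reduction above.

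The argument is by induction on $|E(G)|$. Assume $G$ is edge-minimal among counterexamples. Pick an edge $xy$; removing it keeps the Ore condition, so $G-xy$ has an equitable $k$-coloring. If $x$ and $y$ lie in different classes we are done. So assume they share a class $V^-$, and move $x$ out of it. This gives a nearly equitable coloring: one class $V^-$ of size $s-1$, one class $V^+$ of size $s+1$, and all other classes of size $s$.

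The goal is to repair this nearly equitable coloring. Build the auxiliary digraph $\mathcal H$ on the color classes, with an arc $V\to W$ whenever some vertex of $V$ has no neighbour in $W$ (so that vertex can be moved into $W$). If $V^-$ is reachable from $V^+$ in $\mathcal H$, shifting vertices along a path equalizes all classes. Otherwise, let $\mathcal A$ be the family of classes that can reach $V^-$, with $a=|\mathcal A|$, and let $\mathcal B$ be the remaining classes, with $b=k-a$. Let $A=\bigcup\mathcal A$ and $B=\bigcup\mathcal B$. Every vertex $y\in B$ has a neighbour in each class of $\mathcal A$, so $d_A(y)\ge a$.

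In the classical proof one double-counts $e(A,B)$ against $\Delta\le k-1$. Here only the Ore bound $d(u)+d(v)\le 2k-1$ on edges is available. I would pair each edge $uv$ with the inequality that one endpoint has degree at most $k-1$ (call it low) or the edges are balanced. I would split $B$ into low and high vertices. High vertices have only low neighbours, which gives a matching-style count of $e(A,B)$.

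I would then follow the Kierstead--Kostochka refinement of "solo" vertices and accessible classes. Choose a terminal class $W\in\mathcal A$ and a solo vertex $z\in W$, meaning $z$ is the only vertex of $W$ adjacent to some $y\in B$. Moving $z$ into a $B$-class and recoloring then enlarges $\mathcal A$ or makes progress on a potential function, such as the size of $\mathcal A$ or the number of solo edges.

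The main obstacle is the degree counting in this last step. The classical argument bounds $d(z)+d(y)$ separately by $2(k-1)$, whereas the Ore condition only controls sums over edges. I would try to use the edge $zy$ itself: since $z$ has a unique $B$-neighbour structure, $d(z)+d(y)\le 2k-1$ still forces one of them to miss some class. That yields the needed movable vertex, and the odd slack of $2k-1$ (rather than $2k-2$) should be exactly absorbed by counting the edge $zy$ once.
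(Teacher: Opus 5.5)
The paper gives no proof of this theorem; it quotes it from Kierstead and Kostochka \cite{KKOre}. So your outline has to stand on its own, and as written it does not.

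The opening is fine up to two small repairs. First, reduce to $k\mid |V(G)|$, e.g.\ by adding a disjoint $K_p$ with $p<k$; this keeps the hypothesis and forces the new vertices into distinct classes. Without it, the coloring of $G-xy$ need not have equal classes. Second, take $x$ to be an endpoint with $d(x)\le k-1$, which exists because $d(x)+d(y)\le 2k-1$; only then is $x$ guaranteed to be movable. It is also worth saying that the hypothesis passes to $G[B]$ with $b$ colors: $d_A(y)\ge a$ for $y\in B$ gives $d_B(y)+d_B(y')\le 2k-1-2a=2b-1$ for every edge $yy'$ of $G[B]$.

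But everything after the definition of $\mathcal A$ and $\mathcal B$ is only gestured at. That part is the lemma that a nearly equitable coloring can be repaired, which is the whole content of the theorem. No inequality for $e(A,B)$ is stated, the potential is not specified, and no move is shown to increase it.

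The one concrete step you offer is false. For a solo edge $zy$ with $z\in W\in\mathcal A$ and $y\in B$, the bound $d(z)+d(y)\le 2k-1$ only says that one endpoint has degree at most $k-1$, and low degree does not mean movable. Nothing prevents $d(z)=k-1$, $d(y)=k$ with each of $z$, $y$ having a neighbor in every class other than its own; then neither can be moved anywhere. The same happens with $d(z)=d(y)=k-1$, so the odd slack plays no role.

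In the Hajnal--Szemer\'edi argument, movable vertices are produced not edge by edge but by double counting solo edges over a whole terminal class. That count rests on two uniform bounds, both consequences of $\Delta\le k-1$: $d_B(y)\le b-1$ for every $y\in B$, and $d_B(z)\le b$ for every $z\in A$ not movable within $\mathcal A$. Under the Ore hypothesis both bounds fail for vertices of degree at least $k$. The fact that such vertices are independent with only low neighbors does not by itself restore the count, and this is exactly where Kierstead and Kostochka need a substantially more delicate argument.

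As the paper points out, no polynomial-time algorithm is known for the Ore-type coloring. An argument of the shape you describe would naturally give one: local moves increasing a polynomially bounded potential such as $|\mathcal A|$ or the number of solo edges, plus recursion on the number of colors. So the missing step is not a routine adjustment of the classical proof.
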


There are natural Ore-type analogues of all six statements \ref{HS1}--\ref{HS5}.  As above, Theorem \ref{thm:feedback-graph} shows that all of them are equivalent.  Say that two vertices $x,y$ of a
digraph are \emph{adjacent} if at least one of $(x,y)$ and $(y,x)$ is
an arc.  

\begin{corollary}\label{cor:ore}
Let $k$ be a positive integer and let $D$ be a digraph.  If 
\[
  \dmin_D(x)+\dmin_D(y)\leq 2k-1
\]
for every pair of adjacent vertices $x,y\in V(D)$, then $D$ has an equitable acyclic $k$-coloring.  

Consequently, the Ore-type versions of \ref{HS1}--\ref{HS5} are all equivalent. 
\end{corollary}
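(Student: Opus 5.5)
The plan is to reduce Corollary~\ref{cor:ore} to Theorem~\ref{thm:ore}, in the same way Corollary~\ref{thm:HS5-implies-HS1} reduces \ref{HS1} to \ref{HS5}. Let $F$ be the feedback arc set given by Theorem~\ref{thm:feedback-graph}, and set $G:=G(D,F)$. By Theorem~\ref{thm:feedback-graph}(i), $d_G(v)\le \dmin_D(v)$ for every vertex $v$.

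First I check the Ore condition in $G$. Take any edge $uv\in E(G)$. By the definition of $G(D,F)$, one of $(u,v)$ or $(v,u)$ lies in $F\subseteq E(D)$, so $u$ and $v$ are adjacent in $D$. Therefore
\[
d_G(u)+d_G(v)\le \dmin_D(u)+\dmin_D(v)\le 2k-1 .
\]
Theorem~\ref{thm:ore} now gives an equitable $k$-coloring of $G$. Each color class is independent in $G$, so by Theorem~\ref{thm:feedback-graph}(ii) it induces an acyclic subdigraph of $D$. Hence this coloring is an equitable acyclic $k$-coloring of $D$.

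The one point that needs care is the direction of the adjacency transfer. The hypothesis of Corollary~\ref{cor:ore} only controls pairs that are adjacent in $D$. So I must use that every edge of $G$ comes from an arc of $D$, and never the converse. Since $E(G)$ is built from arcs of $F\subseteq E(D)$, this holds.

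For the equivalence statement, I first write down the Ore-type analogues of \ref{HS1}--\ref{HS5}. For \ref{HS3} the condition is $d(x)+d(y)\le 4k-1$ for adjacent $x,y$. For \ref{HS2+} and \ref{HS2-} the conditions are the corresponding sums of out-degrees or in-degrees being at most $2k-2$ (or $2k-1$). For \ref{HS4} the condition is the sum of maxdegrees being at most $2k-2$ (or $2k-1$). For \ref{HS5} the condition is as in Theorem~\ref{thm:ore}.

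Next I check the chain of implications pairwise, using the pointwise inequalities behind Observation~\ref{obs:deg-con}. For each vertex, $\dmin\le d^{\pm}\le\dmax$ and $2\dmin\le d$. Summing over an adjacent pair shows that each of the stronger Ore hypotheses implies the Ore-type mindegree hypothesis, exactly as in the degree case. The one exception is the total-degree version: there $d(x)+d(y)\le 4k-1$ gives $\dmin(x)+\dmin(y)\le \lfloor(4k-1)/2\rfloor = 2k-1$, which still suffices.

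Symmetric digraphs give the implication down to the graph version. The argument above gives the implication from the graph version back to the mindegree version.
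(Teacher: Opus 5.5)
Your proof is correct and is essentially the paper's argument: take $F$ from Theorem~\ref{thm:feedback-graph}, use that every edge of $G(D,F)$ comes from an arc of $D$ to transfer the Ore condition, apply Theorem~\ref{thm:ore}, and pull the color classes back via part (ii). Your sketch of the equivalences says more than the paper, which leaves that part implicit. To close the cycle back to Theorem~\ref{thm:ore} through symmetric digraphs, fix the thresholds at $2k-1$ rather than $2k-2$ for the out/in-degree and maxdegree versions.
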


\begin{proof}
Let $F$ be the feedback arc set given by
Theorem~\ref{thm:feedback-graph}, and set $G:=G(D,F)$.  If
$xy\in E(G)$, then $x$ and $y$ are adjacent in $D$, and hence
\[
  d_G(x)+d_G(y)
  \leq \dmin_D(x)+\dmin_D(y)
  \leq 2k-1.
\]
Thus by Theorem \ref{thm:ore}, $G$ has an equitable $k$-coloring.  As before, every color class is
independent in $G$ and therefore induces an acyclic subdigraph of $D$.
\end{proof}

In the complementary language of transitive-tournament factors,
Corollary~\ref{cor:ore} was independently discovered by Chang, Wei,
and Yan~\cite[Theorem~3.3]{CWY}.  As is the case with us, they seem not
to have been aware of the work of Aboulker, Oijid, Petit, Rocton, and
Simon~\cite{AOPRS}.  Note that \cite[Theorem~3.3]{CWY} follows by
choosing an ordering which minimizes the number of backward arcs, and
thus their proof does not give a polynomial-time algorithm: minimum
feedback arc set is one of Karp's original NP-complete
problems~\cite{Karp}.

Regarding the Ore-type versions, there remains a separate algorithmic issue.  It is not known if there is a polynomial-time algorithm that constructs the equitable coloring
guaranteed by the Kierstead--Kostochka Ore-type theorem (see \cite[Conjecture 1]{KKX}).  Note that
Theorem~\ref{thm:feedback-graph} removes the feedback-arc-set obstacle,
but the polynomial-time construction of the Ore-type coloring in the graph case remains
open.

\end{document}